\newtheorem{theorem}{Theorem}
\theoremstyle{plain}
\newtheorem{definition}{Definition}
\newtheorem{proposition}{Proposition}
\newtheorem{remark}{Remark}
\numberwithin{equation}{section}
\begin{document}
\title[ ]{Hardy-Littlewood-Polya's inequality and a new concept of weak majorization}
\author{Ionel Roven\c{t}a}
\address{Department of Mathematics, University of Craiova, Craiova 200585, Romania}
\email{ionelroventa@yahoo.com}
\thanks{}
\subjclass[2000]{26B25, 52A40, 97K30, 47N10}
\keywords{majorization, graph theory, Schur-convex function, optimization}

\begin{abstract}
In this paper we study some weak majorization properties with  applications for the trees. A strongly notion of majorization is
introduced and Hardy-Littlewood-Polya's inequality is generalized. 
\end{abstract}
\maketitle

\section{Introduction}

In last years, a lot of papers was dedicated to majorization theory,
that was scattered in journals in a wide variety of fields. Indeed, many majorization concepts have been
reinvented and used in different research areas, as Lorenz or dominance ordering in economics,
optimization and graph theory.

Whenever the solution of a problem involves a discrete uniform distribution,
the idea of a majorization proof was intensively used.
Moreover, if a uniform allocation or distribution was in a sense optimal,
then the concept of majorization frequently can be used to order  allocations or distributions.

Naturally extensions of the majorization concept are possible and indeed many of them have been fruitfully introduced.
The aim of this paper is to introduce a new majorization concept, from which derives  multiple applications
in different areas.

Firstly, we remind the classical majorization concept. For $x\in \mathbb{R}^N$, let denote by $x_{[i]}$ the $i$th largest component of  the vector $x$.

\begin{definition}
If $x,y\in \mathbb{R}^N$
we say that $x$ is weakly majorized by $y$, denoted $x \prec_{*} y$, provided that
\[
\sum_{i=1}^{k}x_{[i]}\leq \sum_{i=1}^{k} y_{[i]}, \; k=1,2,...,N.
\]

Moreover, if $\sum_{i=1}^{N}x_{[i]}=\sum_{i=1}^{N}y_{[i]}$ holds, then we say that  $x$ \emph{is majorized by} $y$ and is denoted by $x\prec y$.
\end{definition}

Further details and applications about majorization can be found in  Marshall and Olkin \cite{MO}.
In fact, majorization turns out to be an essential structure for several classes of symmetric inequalities.
For example, the arithmetic-geometric mean inequality, Jensen's inequality, Hermite-Hadamard's inequality and Hardy-Littlewood-Polya's inequality
can be easily derived by using an argument based on   Schur-convex functions theory.
Moreover, there are many connections
with matrix theory, more exactly with doubly stochastics matrices, i.e.  nonnegative matrices with all rows and columns sums are equal to one.
Recall here  a result  from \cite{MO}.

\begin{theorem}\label{equiv}
Let $x,y\in \mathbb{R}^N$, then the following statements are equivalent:

i) $x\prec y$;

ii) There is a doubly stochastic matrix $A$ such that $x=Ay$;

iii) The inequality $\sum_{i=1}^{N}f(x_{i})\leq \sum_{i=1}^{N}f(y_{i})$, holds for each convex function $f:\mathbb{R}\rightarrow \mathbb{R}$.
\end{theorem}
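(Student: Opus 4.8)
The plan is to prove the cyclic chain $(ii)\Rightarrow(iii)\Rightarrow(i)\Rightarrow(ii)$, since each link rests on a distinct elementary tool. First I would dispatch $(ii)\Rightarrow(iii)$: writing $A=(a_{ij})$, the hypothesis $x=Ay$ says $x_i=\sum_{j}a_{ij}y_j$, and because the rows of $A$ are nonnegative and sum to one, each $x_i$ is a convex combination of the components of $y$. Jensen's inequality then gives $f(x_i)\le\sum_j a_{ij}f(y_j)$ for every convex $f$, and summing over $i$ while interchanging the order of summation lets the column sums $\sum_i a_{ij}=1$ collapse the bound to $\sum_j f(y_j)$, which is exactly $(iii)$.

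Next, for $(iii)\Rightarrow(i)$, I would test $(iii)$ against the convex functions $f_a(t)=\max(t-a,0)$. The computational heart of this step is the elementary identity
\[
\sum_{i=1}^{k}x_{[i]}=\min_{a\in\mathbb{R}}\Big(ka+\sum_{i=1}^{N}(x_i-a)_{+}\Big),
\]
whose minimum is attained at $a=x_{[k]}$ and which follows from $(x_{[i]}-a)_+\ge x_{[i]}-a$ together with the fact that the full sum $\sum_i(x_i-a)_+$ dominates its $k$ largest terms. Applying $(iii)$ to $f_a$ gives $\sum_i(x_i-a)_+\le\sum_i(y_i-a)_+$ for all $a$; choosing $a=y_{[k]}$ and invoking the identity on both sides yields the partial-sum inequalities $\sum_{i=1}^{k}x_{[i]}\le\sum_{i=1}^{k}y_{[i]}$. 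Feeding $(iii)$ the convex functions $f(t)=t$ and $f(t)=-t$ forces $\sum_i x_i=\sum_i y_i$, so altogether $x\prec y$.

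The decisive and most delicate step is $(i)\Rightarrow(ii)$, the explicit construction of a doubly stochastic matrix. Here I would argue by induction on the number of positions in which the decreasingly rearranged versions of $x$ and $y$ disagree, using \emph{Robin Hood transfers} (so-called $T$-transforms) $T=\lambda I+(1-\lambda)Q$, where $Q$ transposes two coordinates and $0\le\lambda\le1$; each such $T$ is doubly stochastic, and averaging two unequal components moves $y$ one step closer to $x$ in the majorization order. Assuming $x\ne y$, the relations $x\prec y$ and $\sum_i x_i=\sum_i y_i$ guarantee indices $j<k$ with $y_{[j]}>x_{[j]}\ge x_{[k]}>y_{[k]}$, and I would choose $\lambda$ so that the transformed vector $z=Ty$ agrees with $x$ in one additional coordinate. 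The main obstacle is precisely the verification that this transfer preserves majorization, $x\prec z\prec y$, while strictly reducing the count of disagreements; granting that lemma, finitely many transfers produce $x=T_m\cdots T_1\,y$, and since a product of doubly stochastic matrices is again doubly stochastic, $A=T_m\cdots T_1$ is the required matrix. An alternative to the $T$-transform bookkeeping would be to show directly that $x\prec y$ places $x$ in the convex hull of the permutations of $y$ and then invoke the Birkhoff--von Neumann theorem that this hull consists exactly of the doubly stochastic images $Ay$.
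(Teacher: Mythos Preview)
The paper does not supply its own proof of this theorem; it is merely recalled as a classical result from Marshall and Olkin. Your argument is correct and is precisely the standard proof found in that reference: Jensen's inequality for $(ii)\Rightarrow(iii)$, testing against the convex functions $t\mapsto(t-a)_+$ together with $t\mapsto\pm t$ for $(iii)\Rightarrow(i)$, and the $T$-transform (Robin Hood) induction for $(i)\Rightarrow(ii)$.
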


The statement $iii)$ is precisely  Hardy-Littlewood-Polya's inequality. A slide generalization of iii)
is presented in  Tomi\'{c} \cite{MT} and Weyl \cite{HW}.

\begin{theorem}\label{TW}
If $x\prec_{*} y$ then for every nondecreasing convex function $f:\mathbb{R}\rightarrow \mathbb{R}$ we have
\[
\sum_{i=1}^{N}f(x_{i})\leq \sum_{i=1}^{N}f(y_{i}).
\]
\end{theorem}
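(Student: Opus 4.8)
The plan is to deduce Theorem \ref{TW} from the already-stated Hardy-Littlewood-Polya inequality (Theorem \ref{equiv}, part iii)) by inserting an auxiliary vector between $x$ and $y$. Concretely, I would first establish the reduction lemma: if $x\prec_{*}y$, then there exists $u\in\mathbb{R}^{N}$ with $x_{[i]}\le u_{[i]}$ for every $i$ and $u\prec y$. Granting this, the theorem follows in two short steps. Since $f$ is nondecreasing, $x_{[i]}\le u_{[i]}$ gives $f(x_{[i]})\le f(u_{[i]})$, and summing over $i$ yields $\sum_{i}f(x_{i})\le\sum_{i}f(u_{i})$. Since $u\prec y$ is genuine majorization, Theorem \ref{equiv} iii) applied to the convex function $f$ gives $\sum_{i}f(u_{i})\le\sum_{i}f(y_{i})$. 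Chaining the two inequalities proves the claim.

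The construction of $u$ is the heart of the argument and the step I expect to be the main obstacle. Arranging both vectors in decreasing order and writing $X_{k}=\sum_{i=1}^{k}x_{i}$, $Y_{k}=\sum_{i=1}^{k}y_{i}$, so that $X_{k}\le Y_{k}$ for all $k$, I would build $u$ by raising the entries of $x$ just enough to absorb the total deficit $\delta=Y_{N}-X_{N}\ge 0$, while never letting a partial sum overshoot the corresponding $Y_{k}$ and while preserving the decreasing order. The delicate points are that adding mass can destroy monotonicity of the sorted vector, and that one must simultaneously keep $u_{[i]}\ge x_{[i]}$ and $\sum_{i=1}^{k}u_{[i]}\le Y_{k}$ with equality at $k=N$. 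Verifying that a feasible $u$ always exists, for instance by a water-filling / inductive argument decreasing from the last coordinate, is where the real work lies.

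Alternatively, and perhaps more transparently, I would give a self-contained proof by summation by parts that also pinpoints why the monotonicity of $f$ is exactly what is needed. Assuming $f$ smooth (the general case following by approximation) and $x,y$ sorted decreasingly, convexity gives $f(x_{i})-f(y_{i})\le c_{i}(x_{i}-y_{i})$ with $c_{i}=f'(x_{i})$; because $x$ is decreasing and $f'$ is nondecreasing, the slopes $c_{i}$ are nonincreasing, and because $f$ is nondecreasing, $c_{N}=f'(x_{N})\ge 0$. Writing $D_{k}=X_{k}-Y_{k}\le 0$ and applying Abel summation, $\sum_{i}c_{i}(x_{i}-y_{i})=\sum_{i=1}^{N-1}(c_{i}-c_{i+1})D_{i}+c_{N}D_{N}$, and every term is $\le 0$: the first sum because $c_{i}-c_{i+1}\ge 0$ and $D_{i}\le 0$, the last because $c_{N}\ge 0$ and $D_{N}\le 0$. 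Hence $\sum_{i}f(x_{i})\le\sum_{i}f(y_{i})$. Here the nonnegativity $c_{N}\ge 0$ is precisely where the nondecreasing hypothesis enters; for ordinary majorization $D_{N}=0$, this term vanishes, and one recovers Theorem \ref{equiv} iii).
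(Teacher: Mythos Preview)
The paper does not actually give its own proof of Theorem~\ref{TW}; it is stated as a classical result due to Tomi\'{c} and Weyl and cited from the literature. (The later Remark after Theorem~\ref{HLPs} recovers only the special case $x\ll y$ with nonnegative entries, not the general weak-majorization statement.) So there is no proof in the paper to compare against, and your proposal must be judged on its own.

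Both of your approaches are correct and standard. The Abel-summation argument is complete as written: the tangent-line inequality $f(x_i)-f(y_i)\le f'(x_i)(x_i-y_i)$ holds by convexity, the slopes $c_i=f'(x_i)$ are nonincreasing because $x_i$ is decreasing and $f'$ is nondecreasing, and the summation-by-parts identity
\[
\sum_{i=1}^{N}c_i(x_i-y_i)=\sum_{i=1}^{N-1}(c_i-c_{i+1})D_i+c_ND_N
\]
together with $D_i\le 0$, $c_i-c_{i+1}\ge 0$, and $c_N\ge 0$ finishes the proof. This route is self-contained and makes transparent exactly where monotonicity of $f$ is used (the sign of the boundary term $c_ND_N$).

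Your first approach via an intermediate vector $u$ with $x_{[i]}\le u_{[i]}$ and $u\prec y$ is also valid; the existence of such a $u$ is a well-known characterization of weak majorization (see, e.g., Marshall--Olkin). Your concern about the construction is legitimate but not an obstacle: one explicit choice is to set $u_i=x_i$ for $i<N$ and $u_N=x_N+(Y_N-X_N)$, then re-sort; the partial sums of the sorted $u$ still lie below those of $y$ because increasing the smallest coordinate can only increase the $k$th largest partial sum by at most $Y_N-X_N\le Y_k-X_k$ at the index where it enters. Either way, once $u$ is in hand, your two-step chain using monotonicity and Theorem~\ref{equiv}~iii) is immediate.
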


Based on the above results it can be easy seen that there is a connection between majorization and doubly stochastic matrices,
which can be strongly related to graph theory and some network  problems. 
The concept of majorization is strongly related with a special type of convex function. 
The Schur-convex functions was introduced by I. Schur in 1923 and
have important applications in analytic inequalities, elementary
quantum mechanics and quantum information theory.

\begin{definition}
The function $F: A\rightarrow\mathbb{R}$, where
$A\subset\mathbb{R}^{N}$, is called Schur-convex if for each $x,y\in A$ with $x\prec y$ we have $F(x)\leq F(y)$.
Any such function $F$ is Schur-concave if  $-F$ is a  Schur-convex function.
\end{definition}

An important source of Schur-convex functions was given in
\cite{MM} by Merkle. Guan in \cite{KG} -\cite{KG1}
proved that all symmetric elementary functions and the symmetric means of order $k$
are Schur-concave functions. Other interesting examples of Schur-convex functions can
be found in \cite{CZW1, CZW3} and \cite{IR2010}.  Li and  Trudinger \cite{LT} consider a special class of
inequalities for elementary symmetric functions that are relevant in the study of partial differential equations associated with curvature
problems. In this context, in \cite{NRov2014} we define the concept of majorization and we prove Hardy-Littlewood-Polya's inequality into the spaces with nonpositive curvature.

Recall here the main result,  concerning the study of  Schur-convexity of a function,  in the case of smoothness properties of the function under attention. See  \cite{NP2006} and \cite{RV} .

\begin{theorem}
\label{JMAAthm1}Let $F(x)=F(x_{1},...,x_{N})$ be a
symmetric function with continuous partial derivatives on
$I^{N}=I\times I\times...\times I$, where $I$ is an open interval.
Then $f:I^{N}\rightarrow\mathbb{R}$ is Schur-convex if and only if the following inequality
\begin{equation}
(x_{i}-x_{j})\left(  \frac{\partial f}{\partial
x_{i}}-\frac{\partial
f}{\partial x_{j}}\right)  \geq0  \label{eqsc1},
\end{equation}
holds on $I^{N}$, for each $i,j\in\{1,..,N\}$. It is strictly convex if inequality (\ref{eqsc1}) is
strict for $x_{i}\neq x_{j},\,1\leq i,j\leq N$.

Any such function $F$ is Schur-concave if the inequality (\ref{eqsc1}) is reversed.
\end{theorem}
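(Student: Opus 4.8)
The plan is to prove the two implications of this Schur--Ostrowski type criterion separately, with the differential condition \eqref{eqsc1} playing the role of an infinitesimal version of monotonicity along ``Robin Hood'' transfers, i.e. along perturbations that move mass from a larger coordinate to a smaller one.

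For \emph{necessity}, assume $F$ is Schur-convex, fix indices $i\neq j$ and a point $x\in I^{N}$. The expression in \eqref{eqsc1} is invariant under interchanging $i$ and $j$ (both factors reverse sign), so I may assume $x_{i}\geq x_{j}$; if $x_{i}=x_{j}$ there is nothing to prove, hence suppose $x_{i}>x_{j}$. For small $t>0$ consider $x(t)=x-t\,e_{i}+t\,e_{j}$, where $e_{i},e_{j}$ are standard basis vectors. For $0\leq t\leq (x_{i}-x_{j})/2$ this is a Robin Hood transfer, so $x(t)\prec x$, and Schur-convexity gives $F(x(t))\leq F(x)$. Thus $g(t):=F(x(t))$ attains a right-hand maximum at $t=0$, whence $g'(0^{+})\leq 0$. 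Differentiating, $g'(0)=\partial_{j}F(x)-\partial_{i}F(x)\leq 0$, i.e. $\partial_{i}F(x)\geq\partial_{j}F(x)$; multiplying by $x_{i}-x_{j}>0$ yields \eqref{eqsc1}.

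For \emph{sufficiency}, assume \eqref{eqsc1} holds throughout $I^{N}$ and take $x\prec y$. The key reduction is that majorization is generated by finitely many T-transforms: there exist $y=z^{(0)},z^{(1)},\dots,z^{(m)}=x$ with each $z^{(\ell+1)}$ obtained from $z^{(\ell)}$ by a single Robin Hood transfer between two coordinates. It therefore suffices to show that $F$ does not increase under one such step. Fixing a step with coordinates $i,j$ arranged so that the $i$th entry is at least the $j$th, parametrize the transfer by $h(s)=F(\dots,u_{i}-s,\dots,u_{j}+s,\dots)$ for $s\in[0,s_{0}]$, with $s_{0}$ chosen to keep the two entries ordered. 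Then $h'(s)=\partial_{j}F-\partial_{i}F\leq 0$ by \eqref{eqsc1} (applied at a point where the $i$th entry dominates the $j$th), so $h$ is nonincreasing and the step cannot raise $F$. Chaining these inequalities along the path gives $F(x)\leq F(y)$, so $F$ is Schur-convex. The strict statement follows identically: under strict inequality in \eqref{eqsc1} one has $h'(s)<0$ whenever the two entries differ, so $F$ strictly decreases unless $x$ is a permutation of $y$.

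The main obstacle is precisely the combinatorial reduction used in the sufficiency direction, namely that $x\prec y$ can be realized by a finite chain of T-transforms. I would either cite this standard fact from Marshall and Olkin \cite{MO}, or reprove it by induction on the number of coordinates in which the current iterate differs from $x$, at each stage transferring mass from a largest over-allocated coordinate toward its target value while maintaining $z^{(\ell)}\prec y$. Once the transfers are in place, everything else reduces to the elementary one-variable monotonicity of $g$ and $h$, which is immediate from the chain rule.
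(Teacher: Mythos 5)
The paper itself gives no proof of this theorem: it is recalled as a known result (the classical Schur--Ostrowski criterion) with pointers to Niculescu--Persson \cite{NP2006} and Roberts--Varberg \cite{RV}, so there is no internal argument to compare yours against. Your proof is the standard textbook argument and is essentially correct: necessity via an infinitesimal Robin Hood transfer $x(t)=x-te_{i}+te_{j}$ and the first-order condition $g'(0^{+})\leq 0$, sufficiency via the decomposition of majorization into finitely many T-transforms (Lemma 2.B.1 in Marshall--Olkin \cite{MO}, which is legitimate to cite) plus one-variable monotonicity along each transfer.

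Two details deserve to be made explicit to close small gaps. First, a general T-transform carries the pair $(u_{i},u_{j})$ to $(\lambda u_{i}+(1-\lambda)u_{j},\,\lambda u_{j}+(1-\lambda)u_{i})$ and may move past the midpoint (the case $\lambda<\tfrac12$); your parametrization with ``$s_{0}$ chosen to keep the two entries ordered'' silently replaces such a transform by the complementary one followed by the transposition of coordinates $i$ and $j$, and this replacement is only harmless because $F$ is \emph{symmetric}. Since symmetry is exactly the hypothesis that makes the sufficiency direction true, you should say this is where it enters. Second, at a parameter value where the two entries coincide, the inequality \eqref{eqsc1} gives no sign information on $\partial_{i}F-\partial_{j}F$ by itself; there you again need symmetry of $F$, which forces $\partial_{i}F=\partial_{j}F$ at points with $z_{i}=z_{j}$, so that $h'\leq 0$ holds on the whole closed interval. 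With these two remarks inserted, the argument is complete, including the strict case, where any nontrivial transfer produces $h'<0$ on a subinterval of positive length and hence a strict decrease unless $x$ is a permutation of $y$.
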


In \cite{XMZAMS} was established a class of analytic inequalities for some special Schur-convex functions which is
related with the solutions of a second order nonlinear differential equation.
These analytic inequalities are used to infer some geometric inequalities, such as isoperimetric inequality. See also \cite{IR2012}.

On the other hand,  Schur-convexity is very useful in optimization problems.  For more details, see the  family of symmetric functions
with applications in optimization and fully nonlinear elliptic equations, such as Monge-Amp\`{e}re equation ( Theorem 6.4, Corollary 6.5 in \cite{NKR}).

The outline of the paper is as follows.  After some preliminaries on majorization and Schur-convex functions from Section 1, 
in   Section 2 we  introduce a new concept of weak majorization and we prove an interesting extension of   Hardy-Littlewood-Polya's inequality. 
In section 3  we  present some  applications into the context of some special class of trees,  the $K$-spiders.

\section{A new weak majorization concept}

In this section we introduce a new majorization concept, which represents a stronger version of weakly majorization.

\begin{definition}
If $x,y\in \mathbb{R}^N$ we say that $x$ is strongly majorized by $y$, denoted by $x \ll y$, if for each $ k=1, 2, \dots, N-1$ 
we have that
\begin{eqnarray*}\label{strong1}
 \sum_{i=1}^k x_{[i] }\sum_{i=k+1}^N y_{[i]}\leq \sum_{i=1}^{k} y_{[i]} \sum_{i=k+1}^N x_{[i]},
\end{eqnarray*}
\begin{eqnarray*} 
\sum_{i=1}^Nx_{[i]} \leq \sum_{i=1}^N y_{[i]}.
\end{eqnarray*}
\end{definition}

In order to simplify the calculus, in the rest of the paper we will use $x_i$ instead of $x_{[i]}$.

\

\begin{remark}
1) Note that, if none of the above sums are null,  we can rewrite  the above inequalities in the following compact form
\begin{equation}\label{strong}
{\bf \frac{x_{1}}{y_{1}}\leq \frac{x_{2}+...+x_{N}}{y_{2}+\dots+y_{N}},\dots,
\frac{x_{1}+...+x_{N-1}}{y_{1}+...+y_{N-1}}\leq \frac{x_{N}}{y_{N}},\; \frac{x_{1}+...+x_{N}}{y_{1}+...+y_{N}}\leq 1.}
\end{equation}
\end{remark}

\begin{proposition}
 If $x,y\in \mathbb{R}^N_{+}$ then  $x\ll y$ implies   $x\prec_{*} y$.
\end{proposition}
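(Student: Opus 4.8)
The plan is to translate both defining conditions into statements about the partial sums and the total sum, and then recover the weak-majorization inequalities by a short algebraic reduction. Writing $X_k=\sum_{i=1}^{k}x_i$ and $Y_k=\sum_{i=1}^{k}y_i$ for the partial sums of the ordered components (recall we now write $x_i$ for $x_{[i]}$), the tail sums become $\sum_{i=k+1}^{N}x_i=X_N-X_k$ and likewise for $y$. In this notation the first family of strong-majorization inequalities reads $X_k(Y_N-Y_k)\le Y_k(X_N-X_k)$ for $k=1,\dots,N-1$, the second condition is simply $X_N\le Y_N$, and the target $x\prec_{*}y$ is exactly $X_k\le Y_k$ for $k=1,\dots,N$.

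The first step I would carry out is to expand the product inequality. Multiplying out $X_k(Y_N-Y_k)\le Y_k(X_N-X_k)$ gives $X_kY_N-X_kY_k\le Y_kX_N-Y_kX_k$; since the mixed terms $X_kY_k$ and $Y_kX_k$ coincide, they cancel and the seemingly quadratic condition collapses to the much simpler relation
\[
X_kY_N\le X_NY_k,\qquad k=1,\dots,N-1.
\]
This is the key simplification: it is a one-parameter comparison of partial sums rescaled by the totals (morally $X_k/X_N\le Y_k/Y_N$, which is precisely the content of the compact form in Remark 1).

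Next I would combine this with the total-sum bound $X_N\le Y_N$. Because $x,y\in\mathbb{R}^N_{+}$, every partial sum is nonnegative, so from $X_N\le Y_N$ and $Y_k\ge 0$ we obtain $X_NY_k\le Y_NY_k$. Chaining this with the displayed inequality yields $X_kY_N\le Y_NY_k$. When $Y_N>0$ I may divide by $Y_N$ to conclude $X_k\le Y_k$ for every $k\le N-1$, while the case $k=N$ is the hypothesis $X_N\le Y_N$ itself; together these give $x\prec_{*}y$.

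The one point requiring care — and the step I would isolate — is the degenerate case $Y_N=0$, where the division is illegitimate. Here nonnegativity makes the statement trivial: $Y_N=0$ with all $y_i\ge 0$ forces $y_i=0$, hence $Y_k=0$ for all $k$; then $0\le X_N\le Y_N=0$ forces every $x_i=0$ as well, so $X_k=Y_k=0$ and all weak-majorization inequalities hold with equality. Thus the main obstacle is not analytic but purely the bookkeeping needed to survive vanishing denominators, and the hypothesis $x,y\in\mathbb{R}^N_{+}$ is exactly what makes both the sign manipulation ($Y_k\ge 0$) and this degenerate case go through.
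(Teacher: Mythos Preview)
Your proof is correct and follows essentially the same route as the paper: after cancelling the mixed terms you obtain $X_kY_N\le X_NY_k$, which is exactly the paper's assertion $X_k\le\alpha Y_k$ with $\alpha=X_N/Y_N$, and then $\alpha\le 1$ (equivalently $X_N\le Y_N$) finishes the argument just as in the paper. Your version is in fact slightly more careful, since you handle the degenerate case $Y_N=0$ explicitly whereas the paper tacitly assumes $\alpha$ is well-defined.
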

\begin{proof}
O moment of reflection gives us  the fact that $x\ll y$ implies the following sequence of inequalities
\[
x_1\leq \alpha \, y_1,\; x_1+x_2\leq \alpha\, (y_1+y_2),..., x_1+...+x_N=\alpha\, (y_1+...+y_N),
\]
where $\alpha=\frac{x_1+...+x_N}{y_1+...+y_N}$.

Since $\alpha \in [0,1]$ we deduce immediately that $x\prec_* y$.

\end{proof}

\begin{remark}
1) If the last inequality  from (\ref{strong}) is an equality, i. e. $\alpha=1$,
then $x\ll y$ is equivalent with $x\prec y$. 

2) On the other hand,  $x\prec_{*} y$ does not implies $x\ll y$. For instance, $(2,0,-1)\prec_* (3,2,1)$,  which not implies that
$(2,0,-1)\ll (3,2,1)$.

3) Even in the case  of vectors with positive coordinates, the two version of weak majorization are not equivalent (since $\alpha$ depends, implicitly,  on the coordinates).
\end{remark}

\

We are now in position to present the main result of the paper, which  consists of a  nice generalization of
Hardy-Littlewood-Polya's inequality, for the case of weak majorization.

\begin{theorem}\label{HLPs}
Let $x,y\in \mathbb{R}^N_{+}$ be two vectors such that  $x \ll y$ and let $\alpha=\frac{x_1+...+x_N}{y_1+...+y_N}$. Then for each convex function $f:\mathbb{R}_+\rightarrow \mathbb{R}$ we have that
\begin{equation}\label{HLPi}
\frac{f(x_1)+...+f(x_N)}{N}\leq  \alpha \frac{f(y_1)+...+f(y_N)}{N} + (1-\alpha)f(0).
\end{equation}
\end{theorem}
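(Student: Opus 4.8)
The plan is to reduce the strong-majorization hypothesis $x\ll y$ to an \emph{ordinary} majorization between $x$ and the rescaled vector $\alpha y$, and then combine the classical Hardy-Littlewood-Polya inequality (Theorem \ref{equiv}, part iii) with the pointwise convexity of $f$.

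First I would extract from $x\ll y$ the same chain of partial-sum inequalities already obtained in the Proposition. Writing $X_k=x_1+\dots+x_k$ and $Y_k=y_1+\dots+y_k$, the defining inequality $X_k(Y_N-Y_k)\le Y_k(X_N-X_k)$ simplifies, after cancelling the common term $X_kY_k$, to $X_kY_N\le Y_kX_N$, that is $X_k\le\alpha Y_k$ for every $k=1,\dots,N-1$, with equality at $k=N$ by the very definition of $\alpha$. Since the entries are arranged in decreasing order and $\alpha\ge 0$, the vector $\alpha y$ is again decreasing, so these relations say precisely that $x\prec\alpha y$ in the sense of classical majorization.

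Next I would apply Theorem \ref{equiv} (iii) to the pair $x\prec\alpha y$: for every convex $f$ this yields $\sum_{i=1}^{N}f(x_i)\le\sum_{i=1}^{N}f(\alpha y_i)$. It then remains to estimate each term on the right. Here I would use that $0\le\alpha\le 1$ — which follows from the second defining inequality $X_N\le Y_N$ together with $x,y\in\mathbb{R}^N_{+}$ — so that $\alpha y_i=\alpha y_i+(1-\alpha)\cdot 0$ is a genuine convex combination of $y_i$ and $0$. Convexity of $f$ gives $f(\alpha y_i)\le\alpha f(y_i)+(1-\alpha)f(0)$ for each $i$; summing over $i$ and dividing by $N$ produces exactly the inequality (\ref{HLPi}).

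The only substantial step is the first one, namely recognising that the partial-sum conditions encoded in $x\ll y$ amount to the classical majorization $x\prec\alpha y$; once this reformulation is in hand, the rest is a one-line invocation of the classical inequality followed by a termwise Jensen estimate, so I do not expect any genuine difficulty. The point worth checking separately is the degenerate situation in which some partial sum $Y_k$ vanishes (forcing the corresponding $y_i$, and then $x_i$, to be zero), where the quotient form used above is unavailable; there the conclusion is trivial, in keeping with the standing assumption of the preceding Remark that none of the sums are null.
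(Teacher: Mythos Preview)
Your proposal is correct and follows exactly the paper's own argument: reduce $x\ll y$ to the classical majorization $x\prec\alpha y$, apply Hardy--Littlewood--Polya, and then use the termwise convexity estimate $f(\alpha y_i)\le\alpha f(y_i)+(1-\alpha)f(0)$. You even spell out the algebra $X_kY_N\le Y_kX_N$ and the degenerate case more carefully than the paper does.
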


\begin{proof}
From (\ref{strong}) we have the same sequence of inequalities
\[
x_1\leq \alpha y_1,\; x_1+x_2\leq \alpha(y_1+y_2),\;....,\; x_1+...+x_N=\alpha(y_1+...+y_N).
\]

Hence, we have that $x\prec \alpha\, y$ and applying classical Hardy-Littlewood-Polya's inequality for the convex function $f$, we have that
\begin{equation}\label{HLPa}
\sum_{k=1}^{N}f(x_k)\leq \sum_{k=1}^{N}f(\alpha y_k).
\end{equation}

Since $f$ is a convex function, by taking into account that $\alpha\in [0,1]$ we have that
\begin{equation}\label{convk}
f(\alpha y_k)=f(\alpha y_k + (1-\alpha)0)\leq \alpha f(y_k)+ (1-\alpha)f(0) \qquad (k=1,\dots,N).
\end{equation}

By using \eqref{HLPa} and \eqref{convk} we obtain that
\[
f(x_1)+...+f(x_N)\leq \alpha\, \left(f(y_1)+...+f(y_N)\right)+ N(1-\alpha)f(0),
\]
hence it follows that \eqref{HLPi} holds.
\end{proof}

\begin{remark}
1) If we consider in Theorem \ref{HLPs} the condition $x\prec y$,  i. e.  $\alpha=1$, then \eqref{HLPi} is exactly  
Hardy-Littlewood-Polya's inequality.

2) We have a strongly Hardy-Littlewood-Polya's type inequality for  convex functions with  $f(0)=0$, of the form
\[
f(x_1)+...+f(x_N)\leq \alpha \, \left(f(y_1)+É+f(y_N)\right). 
\]
Hence, in our case (when $f(0)=0$), we have just proved that the result of Tomi\'{c}  and Weyl (Theorem \ref{TW})  holds even if we do not
suppose the fact that $f$ is a nondecreasing function.

3)  Moreover, if in  addition we consider that  $f$ is a nondecreasing, using the fact that $f(0)\leq f(\alpha y)\leq f(y)$
in the right hand term of (\ref{HLPi}), we obtain the classical result of Tomic and Weyl from Theorem \ref{TW}.

\end{remark}

\section{Applications at the trees}

This section is devoted to some applications  of the new majorization concept in graph theory, more exactly for the trees. 

Firstly, we present a brief introduction in graph theory and we
discuss how is used majorization to obtain valuable  applications on this topic.  See \cite{BLW} and \cite{KBR}, for more details about graph theory and  special  vertices in a tree.
In this context, it is interesting to consider majorization in trees as is introduced and discussed in \cite{GD}. 

Let $T=(V, E)$ be a tree, where $V$ is the set of vertices of the tree and $E$ is the set of edges of the tree. 
Let $d(u,v)$ be the distance between two vertices $u$ and $v$ in a tree $T$, which is defined as the number of edges in the
unique path from $u$ to $v$. For $u\in V$ we define the distance vector of $u$ as $d(u, \cdot)=(d(u,v): v\in V)\in \mathbb{Z}_{+}^{N}$.

\begin{definition}
Let $u,v$ two vertices in a tree $T$. We say that $u$ is \emph{weakly majorized} by $v$, denoted by  $u\prec_{*} v$, if $d(u,\cdot)\prec_{*} d(v,\cdot)$.
We say that $u$ and $v$ is majorization equivalent if $u\prec_{*} v$ and $v\prec_{*} u$. 
\end{definition}

In general, there exist at most two adjacent majorization equivalent vertices,
and in this case we say that $T$ is $m-symmetric$. See \cite{GD}.
Consider a tree $T$ and two adjacent vertices $u$ and $v$ in $T$. If we remove the edge $[u,v]$ we obtain two subtrees $T(u;v)$ and $T(v;u)$, where $u\in T(u;v)$
and $v\in T(v;u)$. Also denote by $V(u,v)$ the set of vertices of $T(u,v)$.  

In \cite{GD} it was proved that if $u\prec_{*}v$ then we have $u\prec_{*} v \prec_{*} v'$, for all $v'\in V(v,u)$. Moreover, it was introduced the following
concept, as a majorization center of a tree.

\begin{definition}
The majorization center is a vertex set, denoted by $M_T$ and is defined as follows:

i) If $T$ is $m-symmetric$ then $M_T=\{u,v\}$, where $u$ and $v$ are the two adjacent majorization-equivalent vertices.

ii) If $T$ is not $m-symmetric$ then $M_{T}$ is the intersection of all vertex sets $V(u;v)$ taken over all adjacent vertices $u$ and $v$ for which the majorization
$u\prec_{*} v$ holds.
\end{definition}

Now, using the our strongly  majorization concept, we introduce the corresponding \emph{strongly majorization center} in a tree.

\begin{definition}
Let $u,v$ two vertices in a tree $T$. We say that $u$ is strongly majorized by $v$, denoted by  $u\ll v$, if $d(u,\cdot)\ll d(v,\cdot)$.
We say that $u$ and $v$ is strongly majorization equivalent if $u\ll v$ and $v\ll u$.
\end{definition}

In a similar way as in we can define the \emph{strongly majorization center}  $M_T^s$ which allow us to present a strongly  version of Proposition 2 from \cite{GD}. 

\begin{proposition}\label{maj1}
Let $T=(V,E)$ be a tree with $|V|=N$ and $f:\mathbb{R}^N_{+}\rightarrow \mathbb{R}$ be a Schur-convex function, which is nondecreasing function in each variable. Define $F:V\rightarrow \mathbb{R}$
by $F(v)=f(d(v,\cdot))$ for each $v\in V$. Then there exists a vertex $v_0\in M_T^s$ such that
\begin{equation}\label{min1}
F(v_0)\leq \alpha F(v),\; \forall v\in V,
\end{equation}
where $\alpha=\frac{\sum_{i=1}^{N}d(v_0,x_i)}{\sum_{i=1}^{N}d(v,x_i)}<1$, and $x_i\in V$ are the vertices of the tree.

In particular, this conclusion holds if $f$ is of the form $f(x)=\sum_{i=1}^{N} g(x_i)$, where $g:\mathbb{R}\rightarrow \mathbb{R}$ is convex and nondecreasing.
\end{proposition}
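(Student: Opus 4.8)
The plan is to split the statement into a purely graph-theoretic part, which produces the distinguished vertex $v_0$, and an analytic part, which converts strong majorization into the asserted inequality. The analytic engine is already at hand: inspecting the proof of Theorem \ref{HLPs} one sees that for $x,y\in\mathbb{R}^N_+$ the relation $x\ll y$ is \emph{equivalent} to $x\prec\alpha y$ with $\alpha=\frac{x_1+\dots+x_N}{y_1+\dots+y_N}\le 1$; indeed the defining inequalities of $\ll$ rearrange to $\sum_{i\le k}x_{[i]}\le\alpha\sum_{i\le k}y_{[i]}$ for all $k$, with equality at $k=N$. I would record this reformulation first, together with the fact that $\ll$ is transitive: since majorization is scale invariant, $x\prec\alpha y$ and $y\prec\beta z$ give $\alpha y\prec\alpha\beta z$, hence $x\prec\alpha\beta z$ with $\alpha\beta=\frac{\sum x_i}{\sum z_i}$. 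This makes $\ll$ a genuine order in which one may speak of a minimal vertex.

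Next I would treat the graph theory. Crossing an edge $[v,w]$ transforms the distance vector in a controlled way: for $z\in V(v;w)$ one has $d(w,z)=d(v,z)+1$, while for $z\in V(w;v)$ one has $d(w,z)=d(v,z)-1$, whence $\sum_i d(w,x_i)=\sum_i d(v,x_i)+|V(v;w)|-|V(w;v)|$. Using this $\pm1$ shift I would establish the strong analogue of Proposition~2 of \cite{GD}: if $u,v$ are adjacent with $u\ll v$, then $u\ll v\ll v'$ for every $v'\in V(v;u)$. Orienting each edge from the $\ll$-smaller to the $\ll$-larger endpoint and invoking this monotonicity, the relation ``flows'' outward to a unique innermost vertex set, namely $M_T^s$, yielding a vertex $v_0\in M_T^s$ with $v_0\ll v$ for every $v\in V$. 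Since crossing an edge toward the larger subtree strictly lowers the total distance, $\sum_i d(v_0,x_i)<\sum_i d(v,x_i)$ for $v$ off the center, so the corresponding $\alpha=\frac{\sum_i d(v_0,x_i)}{\sum_i d(v,x_i)}$ is strictly less than $1$.

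With $v_0$ fixed, the inequality follows. For any $v$, the relation $d(v_0,\cdot)\ll d(v,\cdot)$ gives, by the reformulation above, $d(v_0,\cdot)\prec\alpha\,d(v,\cdot)$. In the separable case $f(x)=\sum_i g(x_i)$ with $g$ convex, nondecreasing and normalized by $g(0)=0$ (the natural normalization, as in the second remark following Theorem \ref{HLPs}), Theorem \ref{HLPs} applied to $g$ gives directly
\[
\frac{1}{N}\sum_i g(d(v_0,x_i))\le\alpha\,\frac{1}{N}\sum_i g(d(v,x_i)),
\]
that is $F(v_0)\le\alpha F(v)$. For a general Schur-convex, coordinatewise nondecreasing $f$, Schur-convexity applied to $d(v_0,\cdot)\prec\alpha\,d(v,\cdot)$ yields $F(v_0)=f(d(v_0,\cdot))\le f(\alpha\,d(v,\cdot))$, and monotonicity then gives $F(v_0)\le F(v)$.

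The main obstacle is the graph-theoretic step, namely showing that $\ll$ is preserved on passing to deeper subtrees. This is more delicate than in \cite{GD}, because $\ll$ is not a comparison of partial sums alone but of $\sum_{i\le k}x_{[i]}$ against $\alpha\sum_{i\le k}y_{[i]}$, and under the $\pm1$ shift the numerators and the scaling factor $\alpha$ move simultaneously; one must also verify that adjacent vertices are $\ll$-comparable so that the edge orientation is well defined. I would handle this by fixing the sorted order, writing the defining inequalities as $S_k\le\alpha T_k$, and tracking how both sides respond when a block of coordinates is shifted by $\pm1$, the single separating edge of the tree keeping the perturbation a clean shift so the inequalities propagate. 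Finally, obtaining the sharp factor $\alpha$ rather than merely $F(v_0)\le F(v)$ in the general Schur-convex case requires passing from $f(\alpha\,d(v,\cdot))$ to $\alpha F(v)$, i.e. a homogeneity/convexity property with $f(\mathbf 0)=0$; for the separable functions singled out in the ``in particular'' clause this is exactly what Theorem \ref{HLPs} supplies, and it is there that the factor $\alpha$ is genuinely gained.
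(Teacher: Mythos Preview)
Your plan is exactly the paper's: the paper's entire proof is the single sentence ``the proof is similar with the proof of Proposition~2 from \cite{GD}, by using in addition Theorem~\ref{HLPs},'' and you have unpacked precisely that---Dahl's edge-by-edge monotonicity argument to produce $v_0\in M_T^s$ with $v_0\ll v$, followed by the analytic step via Theorem~\ref{HLPs}. Your added scrutiny is well placed: the paper does not spell out the propagation of $\ll$ along subtrees, nor does it address the point you raise that for a general Schur-convex monotone $f$ one only reaches $F(v_0)\le f(\alpha\,d(v,\cdot))\le F(v)$, the sharp factor $\alpha$ being secured only in the separable case via Theorem~\ref{HLPs} (and, as you note, with the normalization $g(0)=0$).
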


\begin{proof} In order to obtain (\ref{min1}) the proof is similar with the proof  of Proposition 2 from \cite{GD}, by using in addition  Theorem \ref{HLPs}.
\end{proof}

The above  result
is an application of majorization at the equity in location analysis.  For instance, when locating a public facility, the majorization concept offers a good strategy to optimize the distribution
of the distances to its customers.

Erkut \cite{EE} studied several different inequalities measure, or equity, for the distribution $d=(d_i(x):i\leq N)$ of the distances
between a facility $x$ and its customers $i=1,2,...,N$. In this sense, an example is the measure $\sum_{i=1}^{N}(d_i- d_m)^2$, where $d_m$ is the
average $d=\frac{1}{N}\sum_{i=1}^{N} d_i$. One may then look for a facility location which minimizes the selected inequality measure, in some sense,
that the distances the different customers are as equals as possible.

\subsection{The convexity on the trees}

This subsection is devoted to the study of convex functions defined on special type of trees, the $K$-spiders, which are  spaces with curved geometry.
A formal definition of  the  spaces with global nonpositive curvature (abbreviated, global NPC spaces)
is as follows:

\begin{definition}
A global NPC space is a complete metric space $E=(E,d)$ for which the
following inequality holds true: for each pair of points $x_{0},x_{1}\in E$
there exists a point $y\in E$ such that for all points $z\in E,$%
\begin{equation}
d^{2}(z,y)\leq\frac{1}{2}d^{2}(z,x_{0})+\frac{1}{2}d^{2}(z,x_{1})-\frac{1}%
{4}d^{2}(x_{0},x_{1}). \label{NPC}%
\end{equation}

\end{definition}

These spaces are also known as the Cat 0 spaces. See \cite{Sturm}. In a global
NPC space, each pair of points $x_{0},x_{1}\in E$\ can be connected by a
geodesic (that is, by a rectifiable curve $\gamma:[0,1]\rightarrow E$ such
that the length of $\gamma|_{[s,t]}$ is $d(\gamma(s),\gamma(t))$ for all
$0\leq s\leq t\leq1)$. Moreover, this geodesic is unique. The point $y$ that
appears in Definition 1 is the \emph{midpoint} of $x_{0}$ and $x_{1}$ and has
the property
\[
d(x_{0},y)=d(y,x_{1})=\frac{1}{2}d(x_{0},x_{1}).
\]

Every Hilbert space is a global NPC space. Its geodesics are the line segments.
A Riemannian manifold $(M,g)$ is a global NPC space if and only if it is
complete, simply connected and of nonpositive sectional curvature.

Recently, in \cite{NRov2014} was defined the majorization concept  into such spaces with global nonpositive curvature. It was proved that, if we have a characterization 
of convex functions in such spaces, then combining with a symmetry of coordinates we deduce immediately  the Schur-convexity property of the function under attention. 
For more details, see \cite{NRov2014}.

Let  $K$ be an arbitrary set and for each $i\in K$ we denote by  $N_{i}=\{(i,r):r\in \mathbb{R}\}$ a duplicate of $\mathbb{R_{+}}$
with the usual metric. We define the  $K$-spider $(N,d)$, as the reunion of the spaces  $N_{i},\,i\in K$, in their origins, 
which means
\begin{equation*}
N=\{(i,r):i\in K,\, r\in \mathbb{R} \}/\sim, \;\; where\; (i,0)\sim (j,0)\;
(\forall i, j \in K)
\end{equation*}
and
\begin{equation}\label{met}
d((i,r),(j,s))= \left\{%
\begin{array}{lll}
|r-s|,  \;\mbox{if} \; i=j; \\
|r|+|s|,\;  \mbox{otherwise}.
\end{array}%
\right.
\end{equation}
The sets $N_i$ can be seen as  closed subsets of $N$.
The intersection of each two sets  $N_i$ and $N_j$, with $i\neq j$, is given by the origin  $o:= (i,0)=(j,0)$.

The $K$-spider $N$ depends only on the cardinality of  $ K$. If the set  $%
K=\{1,2,...,k\}$  the $K$-spider is renamed as a $k-spider$. The tripod  is a $3-spider$.

\begin{proposition}
 Each $K$-spider endowed with the metric given by \eqref{met} is a global NPC space.
\end{proposition}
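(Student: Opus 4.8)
The plan is to check the two requirements in the definition of a global NPC space: that $(N,d)$ is a complete metric space, and that the inequality \eqref{NPC} holds for a suitably chosen point $y$ (the midpoint) for every pair $x_0,x_1$. The metric axioms are routine from \eqref{met} (for the triangle inequality one notes that every shortest path either stays inside one copy $N_i$ or passes through the origin $o$). For completeness I would use the function $g(p)=d(p,o)$, which assigns to $p=(i,r)$ its coordinate $r$; since $|g(p)-g(q)|\le d(p,q)$, any Cauchy sequence $(p_n)$ has $g(p_n)$ Cauchy in $\mathbb{R}_+$, converging to some $L\ge 0$. If $L=0$ then $p_n\to o$; if $L>0$ then smallness of $d(p_n,p_m)$ forces the tail of the sequence onto a single branch $N_i$, where it converges to $(i,L)$. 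Hence $N$ is complete.

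Next I would construct the midpoint $y$ of $x_0=(i,a)$ and $x_1=(j,b)$. If $i=j$ the two points lie in one copy of $\mathbb{R}_+$ and $y=(i,(a+b)/2)$ is the usual midpoint. If $i\ne j$ then $d(x_0,x_1)=a+b$, and assuming without loss of generality that $a\ge b$ I would take $y=(i,(a-b)/2)$; a direct check gives $d(x_0,y)=d(y,x_1)=(a+b)/2=\tfrac12 d(x_0,x_1)$, so $y$ is indeed a midpoint.

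The heart of the proof is verifying \eqref{NPC} for an arbitrary $z=(\ell,c)$. The key observation is that whenever $z$ lies on a branch that already supports the geodesic joining $x_0$ and $x_1$, the four points $z,x_0,x_1,y$ embed isometrically into $\mathbb{R}$ (take the two relevant branches as the positive and negative half-lines). In every such configuration \eqref{NPC} reduces to the parallelogram identity in $\mathbb{R}$ and therefore holds with equality. This disposes of all cases except the genuinely tree-like one: $i\ne j$ and $\ell\notin\{i,j\}$. There $d(z,x_0)=c+a$, $d(z,x_1)=c+b$, $d(z,y)=c+(a-b)/2$ and $d(x_0,x_1)=a+b$, and expanding both sides shows
\[
\frac12 d^2(z,x_0)+\frac12 d^2(z,x_1)-\frac14 d^2(x_0,x_1)-d^2(z,y)=2cb\ge 0,
\]
since $c\ge 0$ and $b=\min(a,b)\ge 0$. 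Thus \eqref{NPC} holds in all cases and $N$ is a global NPC space.

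I expect the main obstacle to be purely organizational: setting up the case analysis on the branches of $x_0,x_1,z$ without redundancy, and recognizing the isometric-embedding-into-$\mathbb{R}$ shortcut that collapses every ``collinear'' configuration to the parallelogram law, leaving the single short computation above as the only place where strict negative curvature actually appears.
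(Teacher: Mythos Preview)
Your argument is correct. The paper itself does not prove this proposition at all: its entire proof is the sentence ``See \cite{Sturm}.'' Your approach is therefore strictly more self-contained. You verify completeness via the radial function $g(p)=d(p,o)$ and then establish the NPC inequality by a case analysis on the branches carrying $x_0$, $x_1$, and $z$, reducing every ``collinear'' configuration to the parallelogram identity in $\mathbb{R}$ and handling the single genuinely tree-like case $i\neq j$, $\ell\notin\{i,j\}$ by the explicit computation giving $2cb\ge 0$.

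One minor wording point: the case $i=j$, $\ell\neq i$ is not literally one in which $z$ lies on ``a branch that already supports the geodesic,'' since the geodesic then lives entirely in $N_i$. However, this configuration also embeds isometrically into $\mathbb{R}$ by the same two-branch trick in your parenthetical (send $N_i$ to $[0,\infty)$ and $N_\ell$ to $(-\infty,0]$), and one checks that \eqref{NPC} holds with equality there. So your conclusion stands; only the phrasing of the case split could be tightened.
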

\begin{proof}
See \cite{Sturm}.
\end{proof}

We are now in position to study the convex functions defined on a tripod $N$,  with the edges
 $N_{1}$, $N_{2}$, $N_{3}.$  We consider a function  $f:N\rightarrow \mathbb{R}$  and let $f_{1}:[0,\infty) \rightarrow \mathbb{R}$, $f_{2}:[0,\infty) \rightarrow \mathbb{R}$ 
 and $f_{3}:[0,\infty) \rightarrow \mathbb{R}$ be the  restrictions of    $f$ to each edge  $N_{1}$, $N_{2}$, $N_{3}.$ (i. e. $f((i,k))=f_i(k),  \forall  k\in \mathbb{R}$, $i=1,\dots, 3.$)
 
 The main problem consist in finding the conditions which should be imposed to the functions  $f_1,$ $f_2$ and $f_3$  such that  the function $f$ 
 is convex. Of course, the restrictions $f_1,$ $f_2$, $f_3$ need to be convex. The following result gives an answer of  this problem.

\begin{theorem}
Let $f:N\rightarrow \mathbb{R}$ be a function  and let $f_{1}:[0,\infty) \rightarrow \mathbb{R}$, $f_{2}:[0,\infty) \rightarrow \mathbb{R}$ 
 and $f_{3}:[0,\infty) \rightarrow \mathbb{R}$  be the corresponding restrictions which verify 
 $$
 f((i,k))=f_i(k)  \qquad (  k\in \mathbb{R},\,  i=1,\dots, 3),
 $$
 $$
 f_1(0)=f_2(0)=f_3(0).
 $$
 If  the functions $f_1$, $f_2$ and $f_3$ are convex and the functions  $f_1+f_2$,
 $f_1+f_3$ and $f_2+f_3$ are nondecreasing 
 then $f$ is a convex function.
\end{theorem}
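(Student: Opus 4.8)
The plan is to show that $f$ is convex along every geodesic of the tripod $N$, which is precisely what convexity means in a global NPC space, and since geodesics in $N$ are unique it suffices to check each one. First I would classify the geodesics into two types. If the two endpoints lie on a single edge $N_i$, the connecting geodesic stays inside $N_i$, and there $f$ coincides with $f_i$; convexity of $f_i$ then settles this case immediately. The degenerate situations in which one endpoint is the origin, or one of the coordinates vanishes, are instances of this same-edge case and require no separate argument.

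The substantial case is a geodesic joining points on two distinct edges. Fix $i\neq j$ and points $p=(i,a)$, $q=(j,b)$ with $a,b>0$. From the metric \eqref{met} this geodesic descends the edge $N_i$ from $(i,a)$ to the origin $o$ and then ascends the edge $N_j$ to $(j,b)$, with total length $a+b$. Parametrizing by arc length $s\in[0,a+b]$, I would write the restriction $g(s)=f(\gamma(s))$ as
\[
g(s)=\begin{cases} f_i(a-s), & 0\le s\le a,\\ f_j(s-a), & a\le s\le a+b.\end{cases}
\]
Since $s\mapsto a-s$ and $s\mapsto s-a$ are affine, each branch is convex (a convex function precomposed with an affine map), and the two branches match at $s=a$ because of the compatibility hypothesis $f_i(0)=f_j(0)$; hence $g$ is continuous on $[0,a+b]$ and convex separately on $[0,a]$ and on $[a,a+b]$.

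The hard part, and really the only point where the hypotheses interact, is to glue these two convex pieces into a single convex function at the corner $s=a$ (the origin). Here I would invoke the standard fact that a continuous function which is convex on $[0,a]$ and on $[a,a+b]$ is convex on the whole interval exactly when its left derivative at the junction does not exceed its right derivative there. Computing the one-sided derivatives, which exist since $f_i,f_j$ are convex, gives $g'_-(a)=-f'_{i,+}(0)$ and $g'_+(a)=f'_{j,+}(0)$, where $f'_{i,+}(0)$ denotes the right derivative of $f_i$ at $0$. The gluing condition $g'_-(a)\le g'_+(a)$ therefore becomes
\[
f'_{i,+}(0)+f'_{j,+}(0)\ge 0,
\]
which is simply the assertion that the right derivative of $f_i+f_j$ at $0$ is nonnegative. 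This is furnished by the hypothesis that $f_i+f_j$ is nondecreasing.

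Finally, since all three pairwise sums $f_1+f_2$, $f_1+f_3$ and $f_2+f_3$ are assumed nondecreasing, the gluing inequality holds for every choice of distinct edges $i\neq j$, so $g$ is convex along every origin-crossing geodesic. Combined with the same-edge case, this proves that $f$ is convex along every geodesic of $N$, that is, $f$ is convex, as claimed.
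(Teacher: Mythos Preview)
Your proof is correct and takes a genuinely different, more economical, route than the paper's. The paper does not check convexity along two-point geodesics directly; instead it fixes a probability measure $\mu=\lambda_1\delta_{x_1}+\lambda_2\delta_{x_2}+\lambda_3\delta_{x_3}$ supported on three points lying on the three distinct edges, computes the barycenter $b_\mu$ explicitly (there are four cases, according to which, if any, of the inequalities $\lambda_i a_i\ge \lambda_j a_j+\lambda_k a_k$ holds), and then verifies the Jensen-type inequality $f(b_\mu)\le \sum_i\lambda_i f(x_i)$ case by case. In each case the computation ultimately rests on the same conditions $f'_{i,+}(0)+f'_{j,+}(0)\ge 0$ that you isolate, but the algebra is considerably heavier. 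Your approach buys clarity: by reducing to two-point geodesics you sidestep the barycenter calculation entirely and expose the single structural point---the slope-gluing condition at the origin---on which everything depends. The paper's approach, in turn, has the advantage of exhibiting explicitly how barycenters sit on the tripod, which connects with the paper's broader theme of barycenters and majorization in NPC spaces.
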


\begin{proof}
For simplicity we give the proof when $f$ is a differentiable function. 

Note that the conditions which says that $f_1$, $f_2$ and $f_3$ are convex functions and  the functions $f_1+f_2$,
 $f_1+f_3$ and $f_2+f_3$ are nondecreasing,  mean exactly that the sum of the right derivatives are positives, i. e.
\begin{equation}
f_{1d}^{^{\prime }}(0)+f_{3d}^{^{\prime }}(0)\geq 0,  \label{cneq3}
\end{equation}%
\begin{equation}
f_{1d}^{^{\prime }}(0)+f_{2d}^{^{\prime }}(0)\geq 0,  \label{cneq4}
\end{equation}%
\begin{equation}
f_{2d}^{^{\prime }}(0)+f_{3d}^{^{\prime }}(0)\geq 0.  \label{cneq5}
\end{equation}

Whithout loosing the generality we can suppose that  $f_1(0)=f_2(0)=f_3(0)=0$.
 Let $x_1=(1,a)\in N_1,\, a>0$, $x_2=(2,b)\in N_2,\, b>0$, $x_3=(3,c)\in N_3,\, c>0$
be three points belonging to different edges of the tripod.  We consider the measure $\mu= \lambda_1
\delta_{x_1}+ \lambda_2 \delta_{x_2}+ \lambda_3 \delta_{x_3}$,  where $%
\lambda_1+ \lambda_2+ \lambda_3=1,\, \lambda_i>0, \, i=1,\dots, 3$ and $\delta_x $ is the Dirac measure concentrated at the point $x$. 

In order to prove the convexity of the function $f$ we need to prove that  the following inequality hold
\begin{equation*}
f(b_\mu)\leq \lambda_1 f(x_1)+ \lambda_2 f(x_2)+ \lambda_3 f(x_3),
\end{equation*}
where $b_\mu$  is the baricenter of the measure  $\mu$.

In our case we have that $f(x_1)=f_1(a),\, f(x_2)=f_2(b), f(x_3)=f_3(c) $. In the general case, on a space $(N,d)$ the calculus of the baricenter of the measure  $\mu$ 
is given by the formula
\begin{equation*}
b_u= argmin_{z}\int_{N} d^{2}(z,x)-d^{2}(y,x)\; d\mu(x)=argmin_{z}\int_{N}
d^{2}(z,x)\; d\mu(x).
\end{equation*}

The baricenter $b_\mu$ can be situated on each edges  $N_1,
N_2,N_3$,  and depends on the minimum of the following three expresions
\begin{equation*}
argmin_{\{ r>=0\}} \Big(\lambda_1 (r+a)^2+ \lambda_2 (r+b)^2+
\lambda_3 (c-r)^2\Big),
\end{equation*}
\begin{equation*}
argmin_{\{ r>=0\}} \Big(\lambda_1 (r+a)^2+ \lambda_2 (b-r)^2+
\lambda_3 (r+c)^2\Big),
\end{equation*}
\begin{equation*}
argmin_{\{ r>=0\}} \Big( \lambda_1 (a-r)^2+ \lambda_2 (r+b)^2+
\lambda_3 (r+c)^2\Big).
\end{equation*}

An elementary calculus gives us the following cases:

a) If $\lambda_3 c\geq \lambda_1 a+ \lambda_2 b $ then
\begin{equation*}
b_\mu= (3,\lambda_3 c- \lambda_1 a- \lambda_2 b );
\end{equation*}

b) If  $\lambda_2 b\geq \lambda_1 a+ \lambda_3 c $ then
\begin{equation*}
b_\mu= (2,\lambda_2 b- \lambda_1 a- \lambda_3 c );
\end{equation*}

c) If $\lambda_1 a\geq \lambda_2 b+ \lambda_3 c $ then
\begin{equation*}
b_\mu= (1,\lambda_1 a- \lambda_2 b- \lambda_3 c );
\end{equation*}

d) Otherwise, we have that $b_\mu=o$.

We consider the last two cases,  the other two cases being similarly. 

 Let us suppose that $\lambda_1 a\geq \lambda_2 b+ \lambda_3 c $, where the baricenter is given by
\begin{equation*}
b_\mu= (1,\lambda_1 a- \lambda_2 b- \lambda_3 c ).
\end{equation*}

We will prove that
\begin{equation*}
f(b_\mu)= f_1(\lambda_1 a- \lambda_2 b- \lambda_3 c )\leq \lambda_1 f_1(a)+
\lambda_2 f_2(b)+\lambda_3 f_3(c),
\end{equation*}
which can be rewritten in the following form
\begin{equation}  \label{cneq6}
\lambda_1 a\Big( \frac{f_1(\lambda_1 a- \lambda_2 b- \lambda_3 c)}{\lambda_1
a- \lambda_2 b- \lambda_3 c}-\frac{f_1(a)}{a} \Big)\leq \lambda_2b \Big(
\frac{f_1(\lambda_1 a- \lambda_2 b- \lambda_3 c)}{\lambda_1 a- \lambda_2 b-
\lambda_3 c} + \frac{f_2(b)}{b} \Big)
\end{equation}
\begin{equation*}
+ \lambda_3 c \Big( \frac{f_1(\lambda_1 a- \lambda_2 b- \lambda_3 c)}{%
\lambda_1 a- \lambda_2 b- \lambda_3 c}+\frac{f_3(c)}{c} \Big).
\end{equation*}

By using \eqref{cneq3} and \eqref{cneq4} and the fact that each convex function $g$ verifies  that
\begin{equation*}
\frac{g(x)-g(0)}{x-0}\geq g_d^{^{\prime }}(0), \qquad ( x>0),
\end{equation*}
we obtain that
\begin{equation*}
\frac{f_1(\lambda_1 a- \lambda_2 b- \lambda_3 c)}{\lambda_1 a- \lambda_2 b-
\lambda_3 c}\geq f_{1d}^{^{\prime }}(0),
\end{equation*}
\begin{equation*}
\frac{f_2(b)}{b}\geq f_{2d}^{^{\prime }}(0).
\end{equation*}

Summing up the last two inequalities and by using \eqref{cneq4} we obtain the positivity of the first term from the right hand side.
Similarly, we can prove the positivity of the second term from the right hand side. Since the left hand side term is negative, we obtain easily that \eqref{cneq6} holds.

 If the baricenter $b_\mu =o$ then   the following conditions hold simultaneously
\begin{equation*}
\lambda_3 c\leq \lambda_1 a+ \lambda_2 b,
\end{equation*}
\begin{equation*}
\lambda_2 b\leq \lambda_1 a+ \lambda_3 c,
\end{equation*}
\begin{equation*}
\lambda_1 a\leq \lambda_2 b+ \lambda_3 c.
\end{equation*}

In this case we need to prove the following inequality
\begin{equation}
0 \leq \lambda_1 f_1(a)+ \lambda_2 f_2(b)+ \lambda_3 f_3(c).
\end{equation}

Since $f_1(a)\geq a f'_{1d}(0),\, f_2(b)\geq b f'_{2d}(0),\, f_3( c )\geq c f'_{3d}(0)$,  it is sufficiently to prove that 
\begin{equation*}
\lambda_1 a f'_{1d}(0)+ \lambda_2 b f'_{2d}(0)+
\lambda_3cf'_{3d}(0)\geq 0.
\end{equation*}

By using relations \eqref{cneq3}-\eqref{cneq5} we observe that at least two derivatives are positive and at most one derivate is negative. 
Without loosing the generality let us suppose that  $f'_{3d}(0)> f'_{2d}(0)>0,$  $f'_{1d}(0)<0$ and 
$\lambda_1 a\geq \lambda_2 b\geq \lambda_3 c$.  We have the following estimates

\begin{equation*}
\lambda_1a f'_{1d}(0)+ \lambda_2 b f'_{2d}(0)+
\lambda_3cf'_{3d}(0)\geq \lambda_1a f'_{1d}(0)+
f'_{2d}(0)(\lambda_2 b+ \lambda_3c)
\end{equation*}
\begin{equation*}
\geq \lambda_1 a (-f'_{2d}(0))+ f'_{2d}(0)(\lambda_2
b+ \lambda_3 c)= f'_{2d}(0)(\lambda_2b+\lambda_3 c- \lambda_1
a)\geq 0,
\end{equation*}
and the proof ends.
\end{proof}

\noindent\textbf{Acknowledgement}. The author  was supported by
the strategic grant POSDRU/159/1.5/ S/133255, Project ID 133255 (2014),
co-financed by the European Social Fund within the Sectorial Operational
Program Human Resources Development 2007 - 2013.

\end{document}